 \newtheorem{thm}{Theorem}[section]
 \newtheorem{cor}[thm]{Corollary}
 \newtheorem{lem}[thm]{Lemma}
 \newtheorem{prop}[thm]{Proposition}
 \theoremstyle{definition}
 \newtheorem{defn}[thm]{Definition}
 \theoremstyle{remark}
 \newtheorem{rem}[thm]{Remark}
 \newtheorem{ex}{Example}
 \numberwithin{equation}{section}
\begin{document}
%
%
%
%
%
%
%
%
%

\newcommand{\oo}{{\scriptstyle{\mathcal{O}}}}
\newcommand{\OO}{{\mathcal {O}}}
\newcommand{\nat}{\mathbb N}
\newcommand{\weak}{{\curlyvee}}
\newcommand{\C}{{\mathbb C}}
\newcommand{\Q}{{\mathbb Q}}
\newcommand{\R}{{\mathbb R}}
\newcommand{\Z}{{\mathbb Z}}
\newcommand{\I}{{\mathbf I}}
\def\leq{\leqslant}
\def\geq{\geqslant}

\def\b{\blacksquare}

\newcommand{\codim}{codim}
\newcommand{\Sing}{Sing}
\newcommand{\Supp}{Supp}


\def\baselinestretch{1.1}

\title[\L ojasiewicz exponents and resolution of singularities]{\L ojasiewicz exponents and resolution of \\ singularities}
\author{C. Bivi\`a-Ausina}

\address{%
Institut Universitari de Matem\`atica Pura i Aplicada\\
Universitat Polit\`ecnica de Val\`encia\\
Cam\'i de Vera, s/n\\
46022 Val\`encia\\
Spain}

\email{carbivia@mat.upv.es}

\thanks{The first author was partially supported by DGICYT
Grant MTM2006--06027. The second author was partially supported by
DGICYT Grant MTM2006--10548.}
\author{S. Encinas}

\address{Departamento de Matem\'atica Aplicada\\
Universidad de Valladolid\\
Avda. Salamanca s/n\\
47014 Valladolid \\
Spain}

\email{sencinas@maf.uva.es}
\def\subjclassname{$2000$ Mathematics Subject Classification}
\subjclass[$2000$ Mathematics Subject Classification]{Primary 2S05, 14E15; Secondary 13H15}

\keywords{Resolution of singularities, \L ojasiewicz exponents}




\begin{abstract}
We show an effective method to compute the \L ojasiewicz exponent
of an arbitrary sheaf of ideals of $\OO_X$, where $X$ is a
non-singular scheme. This method is based on the algorithm of
resolution of singularities.
\end{abstract}

\maketitle
\section{Introduction}

Given an analytic function $f:(\C^n,0)\to (\C,0)$ with an isolated
singularity at the origin, the effective computation of the \L
ojasiewicz exponent $\mathcal L_0(f)$ of $f$ is a problem that has
been approached from both algebraic and analytic techniques (see
for instance \cite{Bivia2008prep}, \cite{Fukui1991},
\cite{Lenarcik1998} or \cite{Ploski1988}). This number is defined
as the infimum of those real numbers $\alpha>0$ such that
$$
\Vert x\Vert^\alpha\leq C \Vert \nabla f(x)\Vert,
$$
for some constant $C>0$ and all $x$ belonging to some open
neighbourhood of the origin in $\C^n$, where $\nabla f$ denotes
the gradient of $f$. One of the most significant applications of
$\mathcal L_0(f)$ is the result of B. Teissier \cite[p.\ 280]{Teissier1977}
stating that the degree of topological determinacy of $f$ is equal
to $[\mathcal L_0(f)]+1$, where $[a]$ denotes the integer part of
a number $a\in\R$. Let us denote by $j^rf$ the {\it $r$-jet} of $f$, that is, the sum of all terms
of the Taylor expansion of $f$ around the origin of degree $\leq r$. Then the degree of topological determinacy of $f$
is defined as the minimum of those $r\geq 1$ such that
for all $g\in\OO_n$ verifying that $j^rf=j^rg$, we have that $f$ and $g$ are topologically equivalent, that is,
there exists a germ of homeomorphism $\varphi:(\C^n,0)\to (\C^n,0)$ such that $f=g\circ \varphi$.


Let us denote by $\mathcal O_n$ the ring of analytic functions
$f:(\C^n,0)\to \C$. The definition of \L ojasiewicz exponent of
functions with an isolated singularity is extended naturally to
ideals of $\OO_n$ of finite colength. Let $I$ be an ideal of
$\OO_n$. In this article we apply the explicit construction of a
log-resolution of $I$ given in \cite{BravoEncinasVillamayor2005}
to compute effectively the \L ojasiewicz exponent $\mathcal
L_0(I)$ of $I$ provided that $I$ has finite colength. We consider
the problem of computing $\mathcal L_0(I)$ in a more general
setting, that is, we substitute $I$ by a sheaf of ideals in a
non-singular scheme.


As an application of the main result, we compute the \L ojasiewicz
exponent, and consequently the degree of topological determinacy,
of a function such that $\mathcal L_0(f)$ can not be computed by
means of the existing literature about this subject.



\section{Order functions}

In this section we recall some known facts concerning the integral
closure of ideals and its relation with reduced orders. We will
denote by \( R \) a Noetherian ring.

\begin{defn} \label{DefOrderFunc}
Let \( \bar{\mathbb{R}}_{0}=\{a\in\mathbb{R}\mid a\geq
0\}\cup\{\infty\} \) and let us consider a function \(
\rho:R\to\bar{\mathbb{R}}_{0} \). We say that $\rho$ is an {\it
order function} if the following conditions hold:
\begin{enumerate}
    \item[(i)]  \( \rho(f+g)\geq\min\{\rho(f),\rho(g)\} \), for all \(
    f,g\in R \).

    \item[(ii)]  \( \rho(fg)\geq \rho(f)+\rho(g) \), for all \( f,g\in R \).

    \item[(iii)]   \( \rho(0)=\infty \) and \( \rho(1)=0 \).
\end{enumerate}
\end{defn}

Let \( I\subseteq R \) be an ideal and let $f\in R$. It is well
known, and also easy to prove, that the function
\begin{equation*}
    \nu_{I}(f)=\sup\{m\in \mathbb{N}\mid f\in I^{m}\}
\end{equation*}
is an order function. Let \( J\subseteq R \) be an ideal and set
\begin{equation*}
    \nu_{I}(J)=\sup\{m\in \mathbb{N}\mid J\subseteq I^{m}\}.
\end{equation*}
If \( f_{1},\ldots,f_{s} \) are generators of \( J \), then
it can be checked that
\begin{equation*}
    \nu_{I}(J)=\min\{\nu_{I}(f_{1}),\ldots,\nu_{I}(f_{s})\}.
\end{equation*}

\begin{prop} \label{DefPropNuBar} \textnormal{
\cite{Rees1956}\cite[\S 0.2]{LejeuneTeissier1974}} Let \(
I\subseteq R \) be an ideal with \( I\neq R \). Then the sequence
\begin{equation*}
    \left\{\frac{\nu_{I}(f^{n})}{n}\right\}_{n=1}^{\infty}
\end{equation*}
has a limit in \( \bar{\mathbb{R}}_{0} \). Moreover the function \(
\bar{\nu}_{I}:R\to\bar{\mathbb{R}}_{0} \) defined by
\begin{equation*}
    \bar{\nu}_{I}(f)=\lim_{n\to\infty}\frac{\nu_{I}(f^{n})}{n}
\end{equation*}
is an order function.
\end{prop}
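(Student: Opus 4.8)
The plan is to prove the statement in two stages: first the existence of the limit, then the three order-function axioms for $\bar{\nu}_I$. Throughout I will use that $\nu_I$ is itself an order function, so that it satisfies the axioms (i)--(iii) of Definition \ref{DefOrderFunc}.

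For existence, fix $f\in R$ and set $a_n=\nu_I(f^n)$. Axiom (ii) applied to $f^m$ and $f^n$ gives $a_{m+n}=\nu_I(f^{m+n})\geq\nu_I(f^m)+\nu_I(f^n)=a_m+a_n$, so $(a_n)$ is superadditive in $\bar{\mathbb{R}}_0$. First I would dispose of the degenerate case: if $a_{n_0}=\infty$ for some $n_0$, then superadditivity forces $a_n=\infty$ for all $n\geq n_0$, so $a_n/n\to\infty$ and the limit exists. When all $a_n$ are finite, Fekete's lemma in its superadditive form shows $a_n/n$ converges to $\sup_n a_n/n\in[0,\infty]$; the routine argument writes $n=qm+r$ and uses $a_n\geq qa_m$ to obtain $\liminf_n a_n/n\geq a_m/m$ for every $m$, whence $\lim=\sup$. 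This also records the useful facts that $\bar{\nu}_I(f)=\sup_n\nu_I(f^n)/n$ and that $\nu_I(f^n)\leq n\,\bar{\nu}_I(f)$ for all $n$.

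Axioms (ii) and (iii) for $\bar{\nu}_I$ are then immediate. For (ii), $\nu_I((fg)^n)=\nu_I(f^ng^n)\geq\nu_I(f^n)+\nu_I(g^n)$, and dividing by $n$ and letting $n\to\infty$ gives $\bar{\nu}_I(fg)\geq\bar{\nu}_I(f)+\bar{\nu}_I(g)$. For (iii), $0^n=0$ gives $\bar{\nu}_I(0)=\infty$, while $1^n=1$ together with the hypothesis $I\neq R$ (so $1\notin I^m$ for $m\geq 1$, forcing $\nu_I(1)=0$) gives $\bar{\nu}_I(1)=0$; this is the only place the hypothesis $I\neq R$ enters.

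The main obstacle is axiom (i), the inequality $\bar{\nu}_I(f+g)\geq\min\{\bar{\nu}_I(f),\bar{\nu}_I(g)\}$, since the naive bound only produces $\min\{\nu_I(f),\nu_I(g)\}$ on the right. Expanding $(f+g)^n=\sum_k\binom{n}{k}f^kg^{n-k}$ and applying (i) and (ii) for $\nu_I$ (noting $\nu_I(\binom{n}{k}h)\geq\nu_I(h)$, as an integer multiple is a repeated sum) gives $\nu_I((f+g)^n)\geq\min_{0\leq k\leq n}\big(\nu_I(f^k)+\nu_I(g^{n-k})\big)$. The point is to estimate this minimum asymptotically rather than termwise. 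Fixing reals $a<\bar{\nu}_I(f)$ and $b<\bar{\nu}_I(g)$, the convergence of $\nu_I(f^m)/m$ and $\nu_I(g^m)/m$ provides an $M_0$ with $\nu_I(f^m)>ma$ and $\nu_I(g^m)>mb$ for all $m\geq M_0$. I would then split the minimization into three ranges of $k$: when both $k\geq M_0$ and $n-k\geq M_0$, a convex-combination estimate gives the sum $>n\min\{a,b\}$; when $k<M_0$ (or symmetrically $n-k<M_0$), discarding the small factor and keeping the bound on the large one gives the sum $>n\min\{a,b\}-C$ for a constant $C$ independent of $n$. Hence $\nu_I((f+g)^n)/n>\min\{a,b\}-C/n$, and passing to the limit yields $\bar{\nu}_I(f+g)\geq\min\{a,b\}$. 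Letting $a\uparrow\bar{\nu}_I(f)$ and $b\uparrow\bar{\nu}_I(g)$, which also covers the case of infinite values, completes axiom (i) and the proof.
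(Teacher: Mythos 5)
Your proof is correct and complete. Note, however, that the paper offers no proof of this proposition at all: it is stated with citations to Rees and to Lejeune--Teissier, and the only ingredient the authors later reprove by their own methods is the rationality of $\bar{\nu}_{I}$ (via log-resolutions, in Corollary \ref{nuesracional}), not the existence of the limit or the order-function axioms. So there is nothing in the paper to compare against; measured against the cited sources, your argument is the standard one. The superadditivity of $n\mapsto\nu_I(f^n)$ plus Fekete's lemma (with the degenerate case $\nu_I(f^{n_0})=\infty$ correctly split off) gives the limit and the identity $\bar{\nu}_I(f)=\sup_n\nu_I(f^n)/n$, which the paper records separately as Remark \ref{RemNuBarCrec}. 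Your treatment of axiom (i) is the genuinely nontrivial point and is handled properly: the binomial expansion only yields $\nu_I((f+g)^n)\geq\min_k\bigl(\nu_I(f^k)+\nu_I(g^{n-k})\bigr)$, and your three-range splitting at a threshold $M_0$, absorbing the boundary terms into a constant $C$ independent of $n$, is exactly what is needed to pass from the termwise bound to the asymptotic one. The only cosmetic remark is that axiom (i) is vacuous when $\min\{\bar{\nu}_I(f),\bar{\nu}_I(g)\}=0$, so one may as well restrict to $a,b>0$, but your convexity estimate $ka+(n-k)b\geq n\min\{a,b\}$ holds for any sign of $a,b$ anyway.
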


The number $\bar{\nu}_{I}(f)$ is called the {\it reduced order of
$f$ with respect to $I$}. It is proved in
\cite{LejeuneTeissier1974} that \(
\bar{\nu}_{I}(f)\in\mathbb{Q}_+\cup\{\infty\} \), for all $f\in
R$.
We will show this result using the existence of embedded
desingularization of schemes and log-resolution of ideals.

\begin{rem} \label{RemNuBarCrec}
The sequence \(
\left\{u_{n}=\frac{\nu_{I}(f^n)}{n}\right\}_{n=1}^{\infty} \) is
not an increasing sequence, in general. However, it is straightforward to see
that, for any positive integer $i\geq 2$, the subsequence \(
\{u_{i^{n}}\}_{n=1}^{\infty} \) is increasing, so that
\begin{equation*}
    \bar{\nu}_{I}(f)=\lim_{n\to\infty}\frac{\nu_{I}(f^{n})}{n}=
    \sup\left\{\frac{\nu_{I}(f^{n})}{n}\mid n\in\mathbb{N}\right\}
\end{equation*}
and \( n\bar{\nu}_{I}(f)\geq\nu_{I}(f^{n}) \) for all \( n \). In
particular \( \bar{\nu}_{I}(f)\geq\nu_{I}(f) \), for all $f\in R$.
\end{rem}

\begin{lem} \label{Lema_pq_barnu}
\textnormal{\cite[0.2.9]{LejeuneTeissier1974}} Let $I$ and $J$ be
ideals of $R$ and let $p,q$ be positive integers. Then
\begin{equation*}
    \bar{\nu}_{I^{p}}(J^{q})(x)=
    \frac{q}{p}\bar{\nu}_{I}(J).
\end{equation*}
\end{lem}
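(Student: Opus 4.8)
The plan is to reduce the whole statement to the behaviour of the integer-valued order $\nu_I$ under taking powers of the two ideals, and then to pass to the limit, letting the floor function and the extra factor $q$ disappear asymptotically. (I read $\bar{\nu}_{I}(J)$ as the natural extension of Proposition \ref{DefPropNuBar} to ideals, namely $\bar{\nu}_{I}(J)=\lim_{n\to\infty}\nu_{I}(J^{n})/n$, and I treat the stray ``$(x)$'' in the displayed formula as a typographical slip.) First I would record the elementary identity that for every ideal $K\subseteq R$ and every positive integer $p$,
\begin{equation*}
  \nu_{I^{p}}(K)=\left\lfloor \frac{\nu_{I}(K)}{p}\right\rfloor .
\end{equation*}
This is immediate from the definitions: since $I^{a}\subseteq I^{b}$ whenever $a\geq b$, we have $K\subseteq (I^{p})^{m}=I^{pm}$ if and only if $pm\leq \nu_{I}(K)$, so the largest admissible $m$ is $\lfloor \nu_{I}(K)/p\rfloor$; when $\nu_{I}(K)=\infty$ both sides are $\infty$.

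Next I would apply this identity with $K=(J^{q})^{n}=J^{qn}$ and feed it into the defining limit. Writing out $\bar{\nu}_{I^{p}}(J^{q})=\lim_{n\to\infty}\nu_{I^{p}}(J^{qn})/n$ and inserting the identity gives
\begin{equation*}
  \bar{\nu}_{I^{p}}(J^{q})=\lim_{n\to\infty}\frac{1}{n}\left\lfloor \frac{\nu_{I}(J^{qn})}{p}\right\rfloor .
\end{equation*}
Because a floor differs from its argument by less than $1$, the discrepancy is $O(1/n)$ and vanishes in the limit, so the floor may be dropped and the constant $1/p$ pulled out. It then remains to deal with the factor $q$, which I would handle by a subsequence argument:
\begin{equation*}
  \bar{\nu}_{I^{p}}(J^{q})=\frac{1}{p}\lim_{n\to\infty}\frac{\nu_{I}(J^{qn})}{n}=\frac{q}{p}\lim_{n\to\infty}\frac{\nu_{I}(J^{qn})}{qn}=\frac{q}{p}\,\bar{\nu}_{I}(J),
\end{equation*}
where in the last equality the subsequence indexed by $m=qn$ of the convergent sequence $\{\nu_{I}(J^{m})/m\}_{m}$ tends to the same limit $\bar{\nu}_{I}(J)$ as the full sequence.

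The only point that needs genuine care, and which I regard as the main obstacle, is the \emph{existence} of the limit $\lim_{m\to\infty}\nu_{I}(J^{m})/m$, since Proposition \ref{DefPropNuBar} is stated for a single element $f$ rather than for an ideal. I would supply the ideal version by observing that $m\mapsto \nu_{I}(J^{m})$ is superadditive: if $J^{m}\subseteq I^{a}$ and $J^{m'}\subseteq I^{b}$ then $J^{m+m'}\subseteq I^{a+b}$, whence $\nu_{I}(J^{m+m'})\geq \nu_{I}(J^{m})+\nu_{I}(J^{m'})$. Fekete's subadditive lemma (applied to $-\nu_{I}(J^{m})$) then guarantees that $\nu_{I}(J^{m})/m$ converges to its supremum in $\bar{\mathbb{R}}_{0}$. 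Once this convergence is in hand, both the interchange of the floor with the limit and the passage to the subsequence $m=qn$ are legitimate, and the two displays above complete the argument.
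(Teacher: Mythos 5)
Your proof is correct, but note that the paper does not prove this lemma at all: it is quoted verbatim from Lejeune--Teissier \cite[0.2.9]{LejeuneTeissier1974}, so there is no in-paper argument to compare against. What you supply is a complete elementary substitute, resting on three sound ingredients: the exact identity $\nu_{I^{p}}(K)=\lfloor \nu_{I}(K)/p\rfloor$ (valid, including the degenerate cases $\nu_{I}(K)=0$ or $\infty$, because the set $\{m : K\subseteq I^{m}\}$ is down-closed so its supremum is attained when finite); superadditivity of $m\mapsto\nu_{I}(J^{m})$ plus Fekete's lemma to get existence of $\lim_{m}\nu_{I}(J^{m})/m$ in $\bar{\mathbb{R}}_{0}$ (this is the ideal-theoretic analogue of the paper's Remark~\ref{RemNuBarCrec}, and your version is actually cleaner since it gives the full limit rather than just monotone subsequences); and the harmless disposal of the floor and of the dilation $m=qn$ in the limit. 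You are also right that the ``$(x)$'' in the display is a typographical slip. The one point worth making explicit is your choice of definition: you take $\bar{\nu}_{I}(J)=\lim_{n}\nu_{I}(J^{n})/n$, whereas the paper's only stated definition for ideals (in the sheaf setting) is $\inf_{f\in J}\bar{\nu}_{I}(f)$; the paper itself silently uses both (it writes $\bar{\nu}_{\mathcal I}(\mathcal J)(x)=\sup_{n}c_{n}/n$ in the proof that $\bar\nu=\mu$), and the equivalence of the two is a genuine, if standard, fact from \cite{LejeuneTeissier1974} that neither you nor the paper proves. With that convention fixed, your argument is a valid and self-contained proof of the lemma.
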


For an ideal \( I \) of \( R \) we will denote by \( \overline{I}
\) the integral closure of \( I \).

\begin{lem} \label{LemaBarNu1}
\textnormal{\cite[1.15]{LejeuneTeissier1974}\cite[p.\
138]{HunekeSwanson2006}} Let $R$ be a Noetherian ring and let
$I,J$ be ideals of $R$. If \(J\subseteq\overline I \), then \(
\overline\nu_{I}(J)\geq 1 \).
\end{lem}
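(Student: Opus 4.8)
The plan is to reduce the statement about the ideal $J$ to a statement about each of its generators, and then to control $\nu_I(f^n)$ for a single element $f\in\overline I$ by means of an explicit integral dependence equation. Write $J=(f_1,\dots,f_s)$. Since $J\subseteq\overline I$, each generator $f_j$ lies in $\overline I$, hence satisfies an equation $f_j^{d_j}+a_{j,1}f_j^{d_j-1}+\dots+a_{j,d_j}=0$ with $a_{j,i}\in I^{i}$. The first key step is the purely elementary claim that for every $n\geq 0$,
\[
f_j^{\,n}\in I^{\max\{n-d_j+1,\,0\}},
\]
with the convention $I^0=R$; weakening $d_j$ to $d:=\max_j d_j$ then gives $f_j^{\,n}\in I^{\max\{n-d+1,0\}}$ uniformly in $j$.

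First I would prove this claim by induction on $n$. For $n\leq d_j-1$ there is nothing to prove, since the exponent is $0$. For $n\geq d_j$ I would rewrite $f_j^{\,n}=f_j^{\,n-d_j}f_j^{d_j}=-\sum_{i=1}^{d_j}a_{j,i}f_j^{\,n-i}$ and substitute the inductive bounds for the lower powers $f_j^{\,n-i}$. A summand $a_{j,i}f_j^{\,n-i}$ then lies in $I^{\,i+\max\{n-i-d_j+1,\,0\}}$, and one checks in the two cases $i\leq n-d_j+1$ and $i> n-d_j+1$ that the exponent is always $\geq n-d_j+1$; hence $f_j^{\,n}\in I^{\,n-d_j+1}$. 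This is the only genuine computation to carry out.

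Granting the claim, the passage to $J$ uses only that $\nu_I$ is an order function. For a multi-index $\alpha$ with $|\alpha|=n$, property (ii) of Definition~\ref{DefOrderFunc} (superadditivity) gives
\[
\nu_I\big(f_1^{\alpha_1}\cdots f_s^{\alpha_s}\big)\ \geq\ \sum_{j=1}^{s}\nu_I\big(f_j^{\alpha_j}\big)\ \geq\ \sum_{j=1}^{s}(\alpha_j-d+1)\ =\ n-s(d-1),
\]
where the second inequality uses the claim together with $\nu_I\geq 0$. Since the monomials $f_1^{\alpha_1}\cdots f_s^{\alpha_s}$ with $|\alpha|=n$ generate $J^{n}$, the formula expressing $\nu_I$ of an ideal as the minimum over a generating set yields $\nu_I(J^n)\geq n-s(d-1)$. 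Dividing by $n$ and using that $\overline{\nu}_I(J)=\lim_{n\to\infty}\nu_I(J^n)/n=\sup_n\nu_I(J^n)/n$ (the sequence $\{\nu_I(J^n)\}_n$ being superadditive, exactly as in Proposition~\ref{DefPropNuBar} and Remark~\ref{RemNuBarCrec}), I conclude
\[
\overline{\nu}_I(J)\ \geq\ \sup_{n}\Big(1-\tfrac{s(d-1)}{n}\Big)\ =\ 1.
\]

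The step I expect to be the only real obstacle is the inductive claim $f^{\,n}\in I^{\,n-d+1}$: it is the sole place where integral dependence is actually used, and the bookkeeping in the two cases must be done so that the constant $d$ (and hence the uniform constant $s(d-1)$ in the final bound) is independent of $n$; everything else is formal manipulation of $\nu_I$ and the definition of $\overline{\nu}_I$. As an alternative one could invoke the valuative description $\overline{\nu}_I(J)=\min_v v(J)/v(I)$ over the Rees valuations $v$ of $I$, together with the valuative criterion $f\in\overline I\iff v(f)\geq v(I)$ for all such $v$, which makes the inequality immediate; I prefer the argument above because it is self-contained and does not require the Rees valuation machinery.
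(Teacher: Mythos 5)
Your proof is correct. Note, however, that the paper does not prove this lemma at all: it is quoted from \cite[1.15]{LejeuneTeissier1974} and \cite[p.\ 138]{HunekeSwanson2006}, so there is no internal argument to compare against. What you supply is the standard elementary proof: the inductive bound $f^{n}\in I^{\,n-d+1}$ extracted from an equation of integral dependence is exactly the computation underlying the cited results, the case analysis in the induction is right, and the passage from generators to $J^n$ via superadditivity of $\nu_I$ and the Fekete-type identity $\lim\nu_I(J^n)/n=\sup\nu_I(J^n)/n$ is sound. Two small remarks. First, the paper's working definition of $\bar\nu_{\mathcal I}(\mathcal J)$ (in the sheaf section) is $\inf_{f\in J}\bar\nu_I(f)$ rather than $\lim_n\nu_I(J^n)/n$; the two agree, but with the former definition your detour through $J^n$ is unnecessary: the case $s=1$ of your claim already gives $\nu_I(f^n)\geq n-d+1$, hence $\bar\nu_I(f)\geq 1$ for every $f\in J\subseteq\overline I$, and the infimum over $f$ is then $\geq 1$ directly. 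Second, your alternative via Rees valuations is also valid and is closer in spirit to how \cite{HunekeSwanson2006} treats the statement, but the self-contained route you chose is preferable in this context since the paper develops $\nu_I$ and $\bar\nu_I$ axiomatically and never introduces Rees valuations.
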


\begin{defn} \label{DefOrdMu}
Let \( I\subseteq R \) be an ideal. We define the function \(
\mu_{I}:R\to\bar{\mathbb{R}}_{0} \) as
\begin{equation*}
    \mu_{I}(f)=\sup\left\{\frac{p}{q}\in\Q_+\mid f^{q}\in
    \overline{I^{p}}\right\}.
\end{equation*}
\end{defn}

As a consequence of \cite[Proposition 10.5.2]{HunekeSwanson2006}
(see also \cite[\S 4.2]{LejeuneTeissier1974}) the set of rational
numbers involved in Definition~\ref{DefOrdMu} does not depend on
the representatives \( p,q \) of the rational number \(
\frac{p}{q} \).

Let us consider the graded ring \( R[T] \), with the usual
graduation on \( T \). Let \( R[IT]\subseteq R[T] \) be the
subring \( R[IT]=\oplus_{n}I^{n}T^{n} \). Let \( f\in R \), we
have that \( f\in\bar{I} \) if and only if the homogeneous element
\( fT\in R[T] \) is in the integral closure of the ring \( R[IT]
\) in \( R[T] \). It is well known (see, for instance, \cite[p.\
95]{HunekeSwanson2006}) that this integral closure is
\begin{equation*}
    \overline{R[IT]}=\bigoplus_{n}\overline{I^{n}}T^{n}
    \subseteq R[T].
\end{equation*}

\begin{lem} \label{LemaPotencias}
If \( f^{q}\in\overline{I} \) and \( g^{q}\in\overline{I} \) then \(
(f+g)^{q}\in\overline{I} \).
\end{lem}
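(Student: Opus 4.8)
The plan is to reduce the statement to a single algebraic fact about the graded ring $R[IT]$ and then expand by the binomial theorem. First I would record the two ingredients that make everything work: the explicit description $\overline{R[IT]}=\bigoplus_n\overline{I^n}T^n$ quoted above, and the fact that, being an integral closure, $\overline{R[IT]}$ is a ring which is integrally closed in $R[T]$. Together with the observation that $\overline I$ is an ideal of $R$, these let me treat the elements $f^qT$ and $g^qT$\,---\,which lie in $\overline{R[IT]}$ precisely because $f^q,g^q\in\overline I$\,---\,as degree-one elements of a ring and multiply them freely.

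The heart of the argument is a \emph{$q$-th root} property: for $h\in R$ one has $h^q\in\overline{I^q}\Rightarrow h\in\overline I$. To see this I would note that $h^q\in\overline{I^q}$ says exactly that $(hT)^q=h^qT^q$ lies in $\overline{I^q}T^q\subseteq\overline{R[IT]}$; hence $hT\in R[T]$ is a root of the monic polynomial $X^q-h^qT^q$ with coefficients in $\overline{R[IT]}$, so it is integral over $\overline{R[IT]}$. Since $\overline{R[IT]}$ is integrally closed in $R[T]$, this forces $hT\in\overline{R[IT]}$, and reading off the degree-one component gives $h\in\overline I$.

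With this in hand the proof assembles quickly. Because $\overline{R[IT]}$ is a ring containing $f^qT$ and $g^qT$, for each $0\leq k\leq q$ the product $(f^qT)^k(g^qT)^{q-k}=\bigl(f^kg^{q-k}\bigr)^qT^q$ again lies in $\overline{R[IT]}$, and comparing degree-$q$ components yields $\bigl(f^kg^{q-k}\bigr)^q\in\overline{I^q}$. The $q$-th root property then gives $f^kg^{q-k}\in\overline I$ for every $k$, and since $\overline I$ is an ideal the binomial expansion $(f+g)^q=\sum_{k=0}^q\binom{q}{k}f^kg^{q-k}$ is a combination of elements of $\overline I$, hence belongs to $\overline I$. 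The main obstacle is the $q$-th root step: the binomial bookkeeping is routine, but one genuinely needs the integral-closedness of $\overline{R[IT]}$ in $R[T]$ (equivalently, the equivalence $h\in\overline I\Leftrightarrow h^q\in\overline{I^q}$) to pass from the $q$-th powers one controls back to the element itself. An alternative route would run through the reduced order: from $f^q,g^q\in\overline I$ one gets $\bar\nu_I(f),\bar\nu_I(g)\geq 1/q$ via Lemma~\ref{LemaBarNu1} and Lemma~\ref{Lema_pq_barnu}, whence $\bar\nu_I((f+g)^q)=q\bar\nu_I(f+g)\geq 1$ by the order-function axioms; but closing this off still requires the converse implication $\bar\nu_I(h)\geq 1\Rightarrow h\in\overline I$, which is the same difficulty in disguise.
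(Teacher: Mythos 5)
Your proof is correct, but it is organized differently from the one in the paper, even though both revolve around the Rees ring $R[IT]$ and its integral closure in $R[T]$. The paper's argument adjoins a formal $q$-th root of $T$: in the finite extension $R[T]\subseteq R[T^{1/q}]$ the elements $fT^{1/q}$ and $gT^{1/q}$ are integral over $R[IT]$ (each is a root of $X^q-f^qT$, resp.\ $X^q-g^qT$), hence so is their sum, hence so is $(f+g)^qT=\bigl((f+g)T^{1/q}\bigr)^q$, which lies in $R[T]$ and is homogeneous of degree one, giving $(f+g)^q\in\overline{I}$ directly --- no binomial expansion and no appeal to $\overline{I}$ being an ideal. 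You instead stay inside $R[T]$, isolate the root-extraction property $h^q\in\overline{I^q}\Rightarrow h\in\overline{I}$ (correctly derived from the integral closedness of $\overline{R[IT]}$ in $R[T]$ and the graded decomposition $\overline{R[IT]}=\bigoplus_n\overline{I^n}T^n$; it is also exactly the independence-of-representatives fact the paper quotes from Huneke--Swanson just before the lemma), apply it to each monomial $f^kg^{q-k}$ after observing that $(f^kg^{q-k})^qT^q=(f^qT)^k(g^qT)^{q-k}\in\overline{R[IT]}$, and finish with the binomial theorem and the fact that $\overline{I}$ is an ideal. What the paper's route buys is brevity and a direct treatment of the sum $f+g$, at the cost of working in the auxiliary ring $R[T^{1/q}]$; what yours buys is that every step takes place in $R[T]$ using only standard closure properties, at the cost of the extra root-extraction lemma and the term-by-term bookkeeping. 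Your closing remark about the reduced-order route is also accurate: it would need the implication $\bar\nu_I(h)\geq 1\Rightarrow h\in\overline{I}$, which is a strictly deeper input than anything used here.
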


\begin{proof}
By assumption \( f^{q}T \) and \( g^{q}T \) are integral over \(
R[IT] \). We observe that the ring extension \( R[T]\subseteq
R[T^{\frac{1}{q}}] \) is finite. Then \( fT^{\frac{1}{q}} \) and
\( gT^{\frac{1}{q}} \) are integral over \( R[IT]\subseteq
R[T^{\frac{1}{q}}] \). Therefore \( (f+g)T^{\frac{1}{q}} \) is
integral over \( R[IT] \). Thus \( (f+g)^{q}T \) is integral over
\( R[IT] \) and we conclude that \( (f+g)^{q}\in\overline{I} \).
\end{proof}

\begin{prop}
Let $I$ be an ideal of $R$. Then \( \mu_{I} \) is an order
function.
\end{prop}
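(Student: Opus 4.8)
The plan is to verify directly the three defining properties of an order function for $\mu_I$, leaning throughout on two facts. The first is the ring structure of $\overline{R[IT]}=\bigoplus_n\overline{I^n}T^n$, which makes $f^q\in\overline{I^p}$ equivalent to $f^qT^p\in\overline{R[IT]}$; the second is the monotonicity $\overline{I^a}\subseteq\overline{I^b}$ whenever $a\geq b$. As a preliminary step I would extract from the ring structure the two submultiplicative inclusions that do the heavy lifting in both (i) and (ii): raising $f^qT^p$ to the $k$-th power gives $(\overline{I^p})^k\subseteq\overline{I^{pk}}$, and multiplying $f^qT^p$ by $g^{q'}T^{p'}$ gives $\overline{I^p}\cdot\overline{I^{p'}}\subseteq\overline{I^{p+p'}}$.

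For property (iii) (the easy condition) I note that $0^q=0\in\overline{I^p}$ for every admissible $p/q$, so every ratio lies in the defining set and $\mu_I(0)=\infty$; and, since $I\neq R$, one has $\overline{I^p}\subseteq\sqrt{I^p}=\sqrt I\neq R$ for every $p\geq 1$, so $1\notin\overline{I^p}$, the defining set for $\mu_I(1)$ is empty, and $\mu_I(1)=0$ under the convention $\sup\emptyset=0$.

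For the submultiplicativity (ii) I would take any $\frac pq\leq\mu_I(f)$ and $\frac{p'}{q'}\leq\mu_I(g)$ realized by $f^q\in\overline{I^p}$ and $g^{q'}\in\overline{I^{p'}}$, pass to the common exponent $qq'$ via $(\overline{I^p})^{q'}\subseteq\overline{I^{pq'}}$ and $(\overline{I^{p'}})^{q}\subseteq\overline{I^{p'q}}$, and then multiply to obtain $(fg)^{qq'}\in\overline{I^{pq'}}\cdot\overline{I^{p'q}}\subseteq\overline{I^{pq'+p'q}}$. This yields $\mu_I(fg)\geq\frac pq+\frac{p'}{q'}$, and taking suprema gives $\mu_I(fg)\geq\mu_I(f)+\mu_I(g)$, the infinite cases being harmless.

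Property (i) is where Lemma~\ref{LemaPotencias} enters, and it is the step I expect to require the most care. Given a rational $a=p/q<\min\{\mu_I(f),\mu_I(g)\}$, I would first pick pairs with $f^{q_1}\in\overline{I^{p_1}}$ and $g^{q_2}\in\overline{I^{p_2}}$ of ratios $\geq a$, and then normalize them to the \emph{same} exponent and the \emph{same} ideal power: raising to suitable powers through $(\overline{I^\bullet})^k\subseteq\overline{I^{\bullet k}}$ and lowering the ideal power by monotonicity, I can arrange $f^{Q^*},g^{Q^*}\in\overline{I^{P^*}}$ with $P^*/Q^*=a$. At that point Lemma~\ref{LemaPotencias}, applied to the ideal $I^{P^*}$ with exponent $Q^*$, gives $(f+g)^{Q^*}\in\overline{I^{P^*}}$, whence $\mu_I(f+g)\geq a$; since this holds for every such $a$, the supremum over them yields $\mu_I(f+g)\geq\min\{\mu_I(f),\mu_I(g)\}$. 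The delicate bookkeeping is precisely this reconciliation of the two independent representations into a single common pair $(P^*,Q^*)$, so that the hypotheses of Lemma~\ref{LemaPotencias} are met verbatim.
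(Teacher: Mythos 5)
Your proof is correct and follows essentially the same route as the paper's (very terse) argument: property (i) via Lemma~\ref{LemaPotencias} after normalizing the two memberships to a common pair $(P^*,Q^*)$, and properties (ii) and (iii) directly from the definition together with the graded structure of $\overline{R[IT]}$; you have simply supplied the bookkeeping the paper leaves implicit. The only point worth flagging is that $\mu_I(1)=0$ does require $I\neq R$, a hypothesis the proposition does not state explicitly but which you correctly identify as necessary.
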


\begin{proof}
The fact that $\mu_I$ satisfies condition (i) of Definition
\ref{DefOrderFunc} follows as a direct application of Lemma
\ref{LemaPotencias}. Conditions (ii) and (iii) follow easily from
the definition of $\mu_I$.
\end{proof}

\section{Resolution of singularities and integral closure.}

In this section, \( X \) will denote an integral separated scheme
of finite type over a field \( k \), where the characteristic of
\( k \) is zero.

If \( \mathcal{I}\subseteq \OO_{X} \) is a sheaf of ideals then
the integral closure \( \overline{\mathcal{I}} \) is a sheaf of
ideals such that for every point $x\in X$, the ideal \(
\overline{\mathcal{I}}_{x} \) is the integral closure of \(
\mathcal{I}_{x}\subseteq\OO_{X,x} \).

The next result is well known and its proof can be found, for
instance, in \cite[p.\ 133]{HunekeSwanson2006}.

\begin{lem}
\label{LemaValorIC} Let \( R \) be a Noetherian domain. Denote by \(
K \) the field of fractions of \( R \). Let \( I\subseteq R \) be
an ideal. For every valuation ring \( R_{v}\subseteq K \) set \(
I_{v}=(IR_{v})\cap R \). Then the integral closure of \( I \) is
\( \overline{I}=\bigcap_{v}I_{v} \), where the intersection ranges
on all valuation rings in \( K \) with center in \( R \).
\end{lem}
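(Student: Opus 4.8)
The plan is to prove the two inclusions $\overline{I}\subseteq\bigcap_v I_v$ and $\bigcap_v I_v\subseteq\overline{I}$ separately, where throughout $R_v$ denotes a valuation ring of $K$ containing $R$ and $v$ its associated valuation. For the first inclusion I would take $x\in\overline{I}$, fix such an $R_v$, and use an integral dependence equation $x^n+a_1x^{n-1}+\cdots+a_n=0$ with $a_i\in I^i$. Since $R$ is Noetherian, $I$ is finitely generated, say by $g_1,\dots,g_s$, and in the valuation ring $R_v$ the extended ideal $IR_v$ is generated by the generator of least value, so that $IR_v=\{y\in K\mid v(y)\geq m\}$ with $m=\min_j v(g_j)$, and likewise $I^iR_v=\{y\mid v(y)\geq im\}$. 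Comparing values in the integral equation, if $v(x)<m$ then every term $a_ix^{n-i}$ would have value strictly larger than $v(x^n)=nv(x)$, which is impossible since their sum equals $-x^n$. Hence $v(x)\geq m$, i.e.\ $x\in IR_v$, and therefore $x\in IR_v\cap R=I_v$.

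For the reverse inclusion I would pass to the Rees algebra and exploit the graded description of its integral closure recalled above. Assume $I\neq 0$ (the cases $I=0$ and $I=R$ being immediate) and let $x\in\bigcap_v I_v$; the goal is to show $xT\in\overline{R[IT]}$, since by $\overline{R[IT]}=\bigoplus_n\overline{I^n}T^n$ the degree-one part then yields $x\in\overline{I}$. The fraction field of $R[IT]$ is $L=K(T)$, because $T=(aT)/a$ for any nonzero $a\in I$. Invoking the classical valuative criterion for the integral closure of a domain in its fraction field, it suffices to check that $xT$ lies in every valuation ring $V$ of $L$ containing $R[IT]$. Given such a $V$ with valuation $v$, its restriction $R_v=V\cap K$ is a valuation ring of $K$ containing $R$, so by hypothesis $x\in I_v\subseteq IR_v\subseteq IV$, giving $v(x)\geq m:=\min_j v(g_j)$. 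On the other hand $g_jT\in R[IT]\subseteq V$ forces $v(T)\geq -v(g_j)$ for all $j$, hence $v(T)\geq -m$. Adding these, $v(xT)=v(x)+v(T)\geq 0$, so $xT\in V$, as required.

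The substantive step is the reverse inclusion: producing, for an element outside $\overline{I}$, a valuation ring of $K$ that detects this. The device that makes it work is the reduction to the Rees algebra $R[IT]$, which converts the ideal-theoretic statement into the ring-theoretic valuative criterion for integral closure in the fraction field $K(T)$, with the degree-one component of the formula $\overline{R[IT]}=\bigoplus_n\overline{I^n}T^n$ recovering the claim for $I$. The only bookkeeping to be careful about is the value computation $v(T)\geq -m$ together with $v(x)\geq m$, and the verification that restricting a valuation of $K(T)$ to $K$ produces a valuation ring with center in $R$; the trivial valuation on $K$ gives $I_v=R$ and so imposes no condition.
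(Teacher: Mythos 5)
Your argument is correct. Note, however, that the paper does not prove this lemma at all: it is stated as a well-known result with a pointer to Huneke--Swanson, so there is no in-paper proof to compare against. What you have written is essentially the standard argument from that reference: the easy inclusion $\overline{I}\subseteq\bigcap_v I_v$ via the integral dependence equation and the fact that $IR_v$ is principal, generated by a generator of minimal value (equivalently, that ideals of valuation rings are integrally closed), and the substantive inclusion via the Rees algebra $R[IT]$, the identification of its fraction field with $K(T)$, the valuative criterion for the integral closure of a domain in its fraction field, and the graded formula $\overline{R[IT]}=\bigoplus_n\overline{I^n}T^n$ already recalled in the paper. The value bookkeeping $v(x)\geq m$, $v(T)\geq -m$ is right, the restriction $V\cap K$ is indeed a valuation ring of $K$ containing $R$, and you have correctly disposed of the degenerate cases $I=0$, $I=R$ and the trivial valuation. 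The only point worth making explicit is that the valuative criterion gives $xT$ in the integral closure of $R[IT]$ inside $K(T)$, and one then intersects with $R[T]$ to land in $\bigoplus_n\overline{I^n}T^n$; since $xT\in R[T]$ this is immediate, but it deserves a sentence.
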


\begin{prop} \label{PropIntClProper}
Let \( \varphi:X'\to X \) be a proper birational morphism and let
\( \mathcal{I}\subseteq\OO_{X} \) be a sheaf of ideals. Then \(
\overline{\mathcal{I}}=(\overline{\mathcal{I}\OO_{X'}})\cap\OO_{X}
\).
\end{prop}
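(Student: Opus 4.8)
The plan is to prove the two inclusions separately, using the valuative criterion for integral closure (Lemma~\ref{LemaValorIC}) as the main tool, together with the fact that $\varphi$ is proper birational. Throughout I would work locally and reduce to an affine situation, writing $R=\OO_X(U)$ for a suitable affine open $U\subseteq X$; since integral closure of ideals commutes with localization and the statement is about sheaves, it suffices to check the equality stalkwise, hence on affine opens of $X$. Because $X$ is integral, $R$ is a Noetherian domain with fraction field $K$, and birationality of $\varphi$ means $X'$ and $X$ share the same function field $K$.

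The inclusion $\overline{\mathcal I}\subseteq(\overline{\mathcal I\OO_{X'}})\cap\OO_X$ is the easy direction: if $f\in\overline{\mathcal I}_x$ then $f$ satisfies a monic integral equation with coefficients in powers of $\mathcal I$, and pulling this equation back along $\varphi$ shows $\varphi^*f\in\overline{\mathcal I\OO_{X'}}$; since $f$ already lies in $\OO_X$, this gives the desired containment. For the reverse inclusion $(\overline{\mathcal I\OO_{X'}})\cap\OO_X\subseteq\overline{\mathcal I}$ I would invoke Lemma~\ref{LemaValorIC}: an element $f\in K$ lies in $\overline{I}$ iff $f\in I_v$ for every valuation ring $R_v\subseteq K$ with center in $R$. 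So suppose $f\in\OO_X$ and $\varphi^*f\in\overline{\mathcal I\OO_{X'}}$, and let $R_v$ be any valuation ring with center in $R$. The key point is that, by the valuative criterion of properness applied to the proper morphism $\varphi$, the center of $R_v$ on $X'$ exists and lies over its center on $X$; at that point the local ring $\OO_{X',x'}$ is dominated by $R_v$, and the hypothesis $\varphi^*f\in\overline{\mathcal I\OO_{X'}}$ forces $f\in (\mathcal I\OO_{X'})_{x'}R_v\cap R = I_v$.

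The main obstacle, and the step deserving the most care, is this lifting of valuations through $\varphi$: one must argue that every valuation ring $R_v$ of $K$ dominating a local ring of $X$ also dominates a local ring of $X'$, and that the extension $I_v$ computed on $X$ agrees with the one computed via $X'$. This is exactly where properness enters—the valuative criterion guarantees a unique morphism $\operatorname{Spec} R_v\to X'$ compatible with the maps to $X$ and $\operatorname{Spec} K$, so the center $x'$ exists. Once the center on $X'$ is produced, the computation of $I_v$ is insensitive to whether we pass through $X$ or $X'$, precisely because $\varphi$ is birational and the valuation lives in the common fraction field $K$. I would then conclude that $f\in I_v$ for every $v$, whence $f\in\overline{I}$ by Lemma~\ref{LemaValorIC}, completing the reverse inclusion and the proof.
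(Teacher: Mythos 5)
Your proposal is correct and follows exactly the route the paper takes: the paper's proof is a one-line citation of Lemma~\ref{LemaValorIC} together with the Valuative Criterion of Properness, which are precisely the two ingredients you combine. Your write-up simply fills in the details (lifting each valuation ring to a center on $X'$ and noting that $I_v$ is unchanged since $\varphi$ is birational), so it is a faithful expansion of the paper's argument rather than a different approach.
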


\begin{proof}
It is a consequence of Lemma \ref{LemaValorIC} and the Valuative
Criterion of Properness \cite[Theorem 4.7, \S II]{Hartshorne}.
\end{proof}

\begin{defn} \label{DefResol}
A \emph{desingularization} of \( X \) is a proper birational morphism \(
\varphi:X'\to X \) such that
\begin{enumerate}
    \item[(i)]  \( X' \) is non-singular;

    \item[(ii)] the morphism \( \varphi \) is an isomorphism outside the singular locus
    of \( X \). That is, if \( U=X\setminus\Sing(X) \) and \(
    U'=\varphi^{-1}(U) \), then \( U'\cong U \) via \( \varphi \).
\end{enumerate}
Assume that \( X\subseteq W \), where \( W \) is a non-singular
scheme. An \emph{embedded desingularization} of \( X\subseteq W \)
is a proper birational morphism \( \Pi:W'\to W \) such that
\begin{enumerate}
    \item[(i)]  \( W' \) is non-singular;

    \item[(ii)] the morphism \( \Pi \) is an isomorphism outside the singular locus
    of \( X \). That is, if \( U=W\setminus\Sing(X) \) and \(
    U'=\Pi^{-1}(U) \), then \( U'\cong U \) via \( \Pi \);

    \item[(iii)] \( W'\setminus U' \) is a simple divisor with normal
    crossings: \( W'\setminus U'=H_{1}\cup\cdots\cup H_{r} \);

    \item[(iv)] if \( X'\subseteq W' \) is the strict transform of \( X \)
    in \( W' \) then \( X' \) is non-singular and has only normal
    crossings with the divisor \( W'\setminus U' \).
\end{enumerate}
\end{defn}

\begin{defn} \label{DefLogResol}
Let \( W \) be non-singular scheme. A {\it log-resolution} of an
ideal \( \mathcal{I}\subseteq\OO_{W} \) is a proper birational
morphism \( \Pi:W'\to W \) such that
\begin{enumerate}
    \item[(i)]  \( W' \) is non-singular,

    \item[(ii)]  \( \Pi \) is an isomorphism outside the support of \( \mathcal{I} \).
    If \( U=W\setminus\Supp(\mathcal{I}) \) and \( U'=\Pi^{-1}(U) \) then \(
    U'\cong U \) via \( \Pi \).

    \item[(iii)] \( W'\setminus U' \) is a simple divisor with normal crossings: \(
    W'\setminus U'=H_{1}\cup\cdots\cup H_{r} \).

    \item[(iv)] The total transform of \( \mathcal{I} \) in \( W' \) is a
    monomial with support in $W'\setminus U'$
    \begin{equation}\label{totaltransfI}
        \mathcal{I}\OO_{W'}=\I(H_{1})^{a_{1}}\cdots
        \I(H_{r})^{a_{r}}.
    \end{equation}
\end{enumerate}
\end{defn}

\begin{rem}
It was proved by Hironaka in \cite{Hironaka1964} that embedded
desingularizations and log-resolutions do exist without
restriction on the dimension of schemes over a field of
characteristic zero. In fact, Hironaka proved
that the morphism \( \Pi \) may be obtained as a sequence of
blowing-ups along regular centers.

The proof in \cite{Hironaka1964} is existential. Constructive
proofs may be found in \cite{Villamayor1989} and also in
\cite{BierstoneMilman1991}. If the characteristic of the ground
field \( k \) is positive, then resolution of singularities is an
open problem for general dimension. The reader may found more
details in \cite{Hauser2003}. We refer to
\cite{BravoEncinasVillamayor2005} for constructive proofs of
embedded desingularization of schemes, log-resolution of ideals
and (non-embedded) desingularization of schemes.

Algorithms implementing resolution of singularities (in
characteristic zero) in the computer are available for explicit
computations. We will use the implementation of
\cite{BodnarSchicho2000b} available at
\begin{center}
    \texttt{http://www.risc.uni-linz.ac.at/projects/basic/adjoints/blowup}
\end{center}
and implemented in Singular \cite{SINGULAR3} and Maple.
There is another implementation of resolution of singularities in
\cite{FruhbisPfister2004} also implemented in Singular.
\end{rem}

\begin{prop}
Let us consider a log-resolution of \( \mathcal{I}\subseteq\OO_{W}
\), as in Definition \textnormal{\ref{DefLogResol}}. Then
\begin{equation*}
    \overline{\mathcal{I}^{m}}=
    \I(H_{1})^{ma_{1}}\cdots \I(H_{r})^{ma_{r}} \cap
    \OO_{W},
\end{equation*}
for any integer $m\geq 1$.
\end{prop}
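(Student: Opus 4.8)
The plan is to reduce the computation of $\overline{\mathcal{I}^m}$ on $W$ to a purely local statement on the non-singular scheme $W'$, where the total transform is monomial. First I would apply Proposition~\ref{PropIntClProper} to the ideal $\mathcal{I}^m$ and the proper birational morphism $\Pi:W'\to W$; since $\Pi$ is proper and birational this gives
$$
\overline{\mathcal{I}^m}=\bigl(\overline{\mathcal{I}^m\OO_{W'}}\bigr)\cap\OO_W.
$$
Next I would rewrite the total transform. Because extension of ideals along $\Pi$ commutes with products, property (iv) of Definition~\ref{DefLogResol} yields $\mathcal{I}^m\OO_{W'}=(\mathcal{I}\OO_{W'})^m=\I(H_1)^{ma_1}\cdots\I(H_r)^{ma_r}$, so the claim reduces to showing that this monomial ideal is already integrally closed on $W'$, i.e. that
$$
\overline{\I(H_1)^{ma_1}\cdots\I(H_r)^{ma_r}}=\I(H_1)^{ma_1}\cdots\I(H_r)^{ma_r}.
$$

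The heart of the argument is this last equality, which I would check stalk by stalk, since integral closure of a sheaf of ideals is computed stalkwise. Fix a point $p\in W'$. Only finitely many of the $H_i$ pass through $p$, and for those that do not, $\I(H_i)_p=\OO_{W',p}$, so they do not contribute to the local ideal. Since $W'$ is non-singular and $H_1\cup\cdots\cup H_r$ has normal crossings, the divisors through $p$, say $H_{i_1},\ldots,H_{i_s}$, can be cut out by part of a regular system of parameters $x_1,\ldots,x_n$ of the regular local ring $\OO_{W',p}$, with $\I(H_{i_j})_p=(x_j)$. Consequently the stalk of the monomial ideal is the principal ideal $(x_1^{ma_{i_1}}\cdots x_s^{ma_{i_s}})$.

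It then suffices to recall that a principal ideal in a normal domain is integrally closed: if $b$ is integral over $(a)$ in such a domain, then dividing the integral equation by $a$ shows that $b/a$ satisfies a monic equation over the ring, hence $b/a$ lies in it and $b\in(a)$. As $\OO_{W',p}$ is regular, hence a normal domain, the stalk $(x_1^{ma_{i_1}}\cdots x_s^{ma_{i_s}})$ is integrally closed, and since this holds at every $p$ the sheaf $\I(H_1)^{ma_1}\cdots\I(H_r)^{ma_r}$ is integrally closed. Combining this with the two displayed identities gives $\overline{\mathcal{I}^m}=\I(H_1)^{ma_1}\cdots\I(H_r)^{ma_r}\cap\OO_W$, as desired.

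The main obstacle is isolating exactly where the normal crossings hypothesis is used: it is precisely what allows the local equations of the $H_i$ through $p$ to be completed to a regular system of parameters, turning the product of powers of the divisorial ideals into a single principal ideal. Once that local principality is established, integral closedness is the standard fact about principal ideals in normal domains, and the rest is formal.
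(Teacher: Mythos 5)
Your proof is correct and follows exactly the paper's route: the paper's own proof is the one-line remark that the claim follows from Proposition~\ref{PropIntClProper} together with the fact that locally principal ideals are integrally closed, and your argument simply fills in the details of those two steps (the reduction via the proper birational morphism, the local principality from normal crossings, and integral closedness of principal ideals in a normal domain).
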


\begin{proof}
It is a consequence of Proposition \ref{PropIntClProper} and the
fact that locally principal ideals are integrally closed.
\end{proof}

\section{The reduced order of a sheaf and \L ojasiewicz exponents}

As in the previous section, here \( X \) will denote an integral
separated scheme of finite type over a field \( k \).

\begin{defn}
Let \( \mathcal{I},\mathcal{J}\subseteq\OO_{X} \) be two sheaves
of ideals. We define two functions \(
\bar{\nu}_{\mathcal{I}}(\mathcal{J}):X\to\bar{\mathbb{R}}_{0} \)
and \( \mu_{\mathcal{I}}(\mathcal{J}):X\to\bar{\mathbb{R}}_{0} \)
as follows
\begin{equation*}
    \bar{\nu}_{\mathcal{I}}(\mathcal{J})(x)=
    \bar{\nu}_{\mathcal{I}_{x}}(\mathcal{J}_{x})=
    \inf_{f\in \mathcal{J}_{x}}\bar{\nu}_{\mathcal{I}_{x}}(f),
    \qquad
    \mu_{\mathcal{I}}(\mathcal{J})(x)=
    \mu_{\mathcal{I}_{x}}(\mathcal{J}_{x})=
    \inf_{f\in \mathcal{J}_{x}}\mu_{\mathcal{I}_{x}}(f),
\end{equation*}
for all $x\in X$.
\end{defn}

We say that a function $\mu:X\to \R\cup\{\infty\}$ is {\it
lower-semicontinuous} if for any $\alpha\in\R$, the set
$F_{\alpha}=\{x\in X\mid \mu(x)\leq \alpha\}$ is closed.
Analogously, we say that $\mu$ is {\it upper-semicontinuous} when
the set $G_{\alpha}=\{x\in X\mid \mu(x)\geq \alpha\}$ is closed,
for all $\alpha\in\R$.

\begin{lem} \label{LemaLambda}
Assume that \( X \) is non-singular and that \( H_{1},\ldots,H_{r}
\) are non-singular irreducible hypersurfaces having only normal
crossings. Let \(
\lambda_{1},\ldots,\lambda_{r}\in\bar{\mathbb{R}}_{0} \) and let
$\bar{\mathbb N}=\mathbb N\cup\{\infty\}$. Let us consider the
function \( \lambda_{i}:X\to\bar{\mathbb{N}} \) given by
\begin{equation*}
\lambda_{i}(x)=
\begin{cases}
\lambda_{i}, & \textnormal{if $x\in H_{i}$,}\\
 \infty, &\textnormal{otherwise}.
\end{cases}
\end{equation*}
Then the function $\lambda:X\to \bar{\mathbb N}$ defined by \(
\lambda=\min\{\lambda_{i}\mid i=1,\ldots,N\} \) is
lower-semicontinuous.
\end{lem}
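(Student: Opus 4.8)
The plan is to verify the definition of lower-semicontinuity directly: for an arbitrary real number $\alpha$ I will show that the sublevel set $F_{\alpha}=\{x\in X\mid \lambda(x)\leq\alpha\}$ is closed. The strategy is first to understand the sublevel sets of the individual functions $\lambda_i$, and then to assemble $F_\alpha$ out of them.

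First I would examine $\{x\in X\mid \lambda_i(x)\leq\alpha\}$ for a fixed index $i$. Since $\lambda_i$ takes the constant value $\lambda_i$ on $H_i$ and the value $\infty$ off $H_i$, and since $\infty\leq\alpha$ fails for every finite $\alpha$, this set equals
\[
\{x\in X\mid \lambda_i(x)\leq\alpha\}=
\begin{cases}
H_i, & \textnormal{if } \lambda_i\leq\alpha,\\
\emptyset, & \textnormal{if } \lambda_i>\alpha.
\end{cases}
\]
In either case the set is closed, because each $H_i$ is a hypersurface and hence closed in $X$, while the empty set is closed. Next, using that $\lambda=\min\{\lambda_1,\ldots,\lambda_r\}$, the inequality $\lambda(x)\leq\alpha$ holds precisely when $\lambda_i(x)\leq\alpha$ for at least one index $i$, so that
\[
F_\alpha=\bigcup_{i=1}^{r}\{x\in X\mid \lambda_i(x)\leq\alpha\}.
\]
This is a finite union of closed sets, hence closed; as $\alpha$ was arbitrary, $\lambda$ is lower-semicontinuous.

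I do not expect a serious obstacle here, since the argument is a direct unwinding of the definitions. The only points that deserve a moment's care are the bookkeeping with the value $\infty$, so that the sublevel sets do not inadvertently pick up points lying outside the $H_i$, and the observation that it is the \emph{finiteness} of the index set that guarantees the union of closed sets is again closed; an infinite family of hypersurfaces would not suffice, which is exactly why one obtains lower-semicontinuity (preservation under finite minima) rather than continuity.
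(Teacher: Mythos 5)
Your proof is correct and is essentially the same as the paper's: both identify $F_\alpha$ as the (finite) union of those hypersurfaces $H_i$ with $\lambda_i\leq\alpha$, hence closed. Your version just spells out the sublevel sets of the individual $\lambda_i$ more explicitly.
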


\begin{proof}
Let \( \alpha\in\bar{\mathbb{R}}_{0} \) and let us consider the
set \( F_{\alpha}=\{x\in X \mid \lambda(x)\leq \alpha\} \). We
observe that \( F_{\alpha} \) is the union of the hypersurfaces
$H_i$ such that \( \lambda_{i}\leq\alpha \). Therefore \(
F_{\alpha}\) is closed and the result follows.
\end{proof}

Let \( \mathcal{I},\mathcal{J}\subseteq\OO_{X} \) be two sheaves
of ideals. Let \( \Pi':X''\to X \) be a desingularization of \( X
\) (in the sense of Definition \textnormal{\ref{DefResol}}) and
let \( \Pi'':X'\to X'' \) be a log-resolution of \(
\mathcal{I}\OO_{X''} \) (as in Definition
\textnormal{\ref{DefLogResol}}), so that
\begin{equation}\label{logres}
    \mathcal{I}\OO_{X'}=\I(H_{1})^{a_{1}}\cdots \I(H_{r})^{a_{r}},
\end{equation}
for some positive integers $a_1,\dots, a_r$. The total transform
\( \mathcal{J}\OO_{X'} \) can be expressed as
\begin{equation}\label{Jprima}
    \mathcal{J}\OO_{X'}=
    \I(H_{1})^{b_{1}}\cdots \I(H_{r})^{b_{r}}\mathcal{J}',
\end{equation}
where \( \mathcal{J}'\subseteq\OO_{X'} \) and \(
\mathcal{J}'\not\subseteq \I(H_{i}) \), for all \( i=1,\ldots,r
\).

\begin{prop} \label{ThFuncionMu} In the setup described above, let us consider the function \(
\lambda=\min\{\frac{b_{i}}{a_{i}}\mid i=1,\ldots,r\}\). Then
\begin{equation*}
    \mu_{\mathcal{I}}(\mathcal{J})(x)=
    \min\big\{\lambda(x')\mid x'\in\Pi^{-1}(x)\big\},
\end{equation*}
for all \( x\in X \), and the function \( \mu_{\mathcal
I}(\mathcal J) \) is lower-semicontinuous.
\end{prop}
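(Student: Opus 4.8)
The plan is to reduce the computation of $\mu_{\mathcal I}(\mathcal J)(x)$ to orders along the divisors $H_i$ on the non-singular model $X'$, where $\Pi=\Pi'\circ\Pi'':X'\to X$ denotes the composite proper birational morphism. For a fixed nonzero germ $f\in\mathcal J_x$ and each $i$, write $v_i(f)=\mathrm{ord}_{H_i}(\Pi^*f)$ for the order of the pullback of $f$ along the prime divisor $H_i$. The first step is to show that
\[
f^q\in\overline{\mathcal I^p_x}\iff q\,v_i(f)\geq p\,a_i\ \text{for every $i$ with }H_i\cap\Pi^{-1}(x)\neq\emptyset .
\]
Indeed, by Proposition \ref{PropIntClProper} applied to the ideal $\mathcal I^p$ one has $\overline{\mathcal I^p}=(\overline{\mathcal I^p\OO_{X'}})\cap\OO_X$; since $\mathcal I^p\OO_{X'}=\I(H_1)^{pa_1}\cdots\I(H_r)^{pa_r}$ is a product of invertible ideals and therefore locally principal, it is integrally closed, so $\overline{\mathcal I^p}=(\mathcal I^p\OO_{X'})\cap\OO_X$. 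Hence $f^q\in\overline{\mathcal I^p_x}$ is equivalent to the containment of locally principal ideals $(\Pi^*f)^q\subseteq\mathcal I^p\OO_{X'}$ in a neighbourhood of the fibre $\Pi^{-1}(x)$. Because the $H_i$ have only normal crossings, near each point of $\Pi^{-1}(x)$ the ideal $\mathcal I^p\OO_{X'}$ is a monomial in a regular system of parameters, and containment of the principal ideal $(\Pi^*f)^q$ in it is detected coordinate-wise, giving the order inequalities above (divisors $H_i$ missing the fibre impose no condition on the germ at $x$). Taking the supremum over admissible $p/q$ in Definition \ref{DefOrdMu} then yields
\[
\mu_{\mathcal I_x}(f)=\min\Big\{\tfrac{v_i(f)}{a_i}\ \Big|\ H_i\cap\Pi^{-1}(x)\neq\emptyset\Big\}.
\]

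Next I would compute $v_i(\mathcal J_x):=\min\{v_i(f)\mid f\in\mathcal J_x\}$. Since $\mathcal J$ is coherent, $(\mathcal J\OO_{X'})_{x'}=\mathcal J_x\OO_{X',x'}$ for $x'\in\Pi^{-1}(x)$, and from \eqref{Jprima} together with the hypothesis $\mathcal J'\not\subseteq\I(H_i)$ one checks that $v_i$ of this stalk equals $b_i$: the factor $\mathcal J'$ contributes nothing, since $\mathcal J'_{x'}\subseteq\I(H_i)_{x'}$ would, after localizing at the generic point of $H_i$, contradict $\mathcal J'\not\subseteq\I(H_i)$. Consequently every $f\in\mathcal J_x$ satisfies $v_i(f)\geq b_i$ for each relevant $i$, and taking the infimum of $\mu_{\mathcal I_x}(f)$ over $f\in\mathcal J_x$ gives at once $\mu_{\mathcal I_x}(\mathcal J_x)\geq\min_i\tfrac{b_i}{a_i}$.

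For the reverse inequality I must produce a single $f\in\mathcal J_x$ attaining $v_i(f)=b_i$ for all relevant $i$ simultaneously; this is the main point. Writing $\mathcal J_x=(g_1,\dots,g_s)$, for each $i$ the locus of coefficient vectors $(c_j)\in k^s$ for which the combination $f=\sum_j c_jg_j$ has $v_i(f)>b_i$ is the kernel of a nonzero $k$-linear form (the initial form along $H_i$ of some $g_j$ with $v_i(g_j)=b_i$ is nonzero by the previous step), hence a proper subspace of $k^s$. As $k$ has characteristic zero it is infinite, so a finite union of proper subspaces cannot exhaust $k^s$; a general $(c_j)$ therefore yields $f$ with $v_i(f)=b_i$ for every relevant $i$, whence $\mu_{\mathcal I_x}(f)=\min_i\tfrac{b_i}{a_i}$. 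This proves
\[
\mu_{\mathcal I}(\mathcal J)(x)=\min\Big\{\tfrac{b_i}{a_i}\ \Big|\ H_i\cap\Pi^{-1}(x)\neq\emptyset\Big\}
=\min\big\{\lambda(x')\mid x'\in\Pi^{-1}(x)\big\},
\]
the last equality being immediate from the definition of $\lambda$; I would dispose of the degenerate cases ($x\notin\Supp\mathcal I$, or $f=0$) separately, where both sides equal $\infty$.

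Finally, lower-semicontinuity follows formally. By Lemma \ref{LemaLambda} the function $\lambda$ is lower-semicontinuous on $X'$, so $F_\alpha=\{x'\in X'\mid\lambda(x')\leq\alpha\}$ is closed for every $\alpha$. The identity just proved gives $\{x\in X\mid\mu_{\mathcal I}(\mathcal J)(x)\leq\alpha\}=\Pi(F_\alpha)$, using that $\Pi$ is surjective and that the minimum over $\Pi^{-1}(x)$ is attained (only finitely many values $b_i/a_i$ occur). Since $\Pi$ is proper, it is a closed map, so $\Pi(F_\alpha)$ is closed and $\mu_{\mathcal I}(\mathcal J)$ is lower-semicontinuous.
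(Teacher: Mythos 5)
Your proof is correct, but it routes the key step differently from the paper. Both arguments pass to $X'$ via Proposition \ref{PropIntClProper}, use that the monomial ideal $\I(H_1)^{pa_1}\cdots\I(H_r)^{pa_r}$ is locally principal and hence integrally closed, exploit $\mathcal J'\not\subseteq\I(H_i)$, and obtain lower-semicontinuity from Lemma \ref{LemaLambda} together with the properness of $\Pi$, exactly as you do. The difference is that the paper never descends to individual elements: it characterizes $\mu_{\mathcal I}(\mathcal J)(x)\geq p/q$ directly by the ideal containment $\mathcal J^q_x\subseteq\overline{\mathcal I^p}_x$, pulls this back to every $x'\in\Pi^{-1}(x)$, and reads off $qb_i\geq pa_i$ from (\ref{logres}) and (\ref{Jprima}); there is no general-element argument at all. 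You instead compute $\mu_{\mathcal I_x}(f)=\min_i v_i(f)/a_i$ for each $f$ and must then exhibit a single $f\in\mathcal J_x$ attaining $v_i(f)=b_i$ for all relevant $i$ simultaneously, which you do by prime avoidance over the infinite field $k$ (your ``bad locus'' for each $i$ is a proper $k$-subspace rather than literally the kernel of one nonzero linear form, but the conclusion stands). What your route buys is that it makes explicit a point the paper leaves tacit: since $\mu_{\mathcal I}(\mathcal J)(x)$ is defined as $\inf_{f}\mu_{\mathcal I_x}(f)$, identifying it with $\sup\{p/q\mid \mathcal J_x^q\subseteq\overline{\mathcal I_x^p}\}$ requires knowing that $f^q\in\overline{I^p}$ for every $f\in J$ forces $J^q\subseteq\overline{I^p}$ (via Lemma \ref{LemaPotencias} or the fact that $J^q$ and $(f_1^q,\dots,f_s^q)$ have the same integral closure); your element-wise computation proves the formula for the infimum directly and shows it is attained. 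The cost is the extra general-element step and some care about where the orders $v_i$ are computed (they should be taken at the generic point of $H_i$, which automatically lies in $\Pi^{-1}(U)$ for any open $U\ni x$ whose preimage meets $H_i$, so the local and generic orders of a principal ideal agree).
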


\begin{proof} Let $p,q$ be positive integers. We observe that \(
(\mathcal{J}^{q})_{x}\subseteq (\overline{\mathcal{I}^{p}})_{x} \)
if and only if \( (\mathcal{J}^{q}\OO_{X'})_{x'}\subseteq
(\mathcal{I}^{p}\OO_{X'})_{x'} \), for all \( x'\in\Pi^{-1}(x) \).
Moreover, according to (\ref{logres}) and (\ref{Jprima}), we
have the following equivalences:
\begin{align*}
    (\mathcal{J}^{q}\OO_{X'})_{x'}\subseteq (\mathcal{I}^{p}\OO_{X'})_{x'}
     & \Longleftrightarrow
     (\I(H_{1})^{qb_{1}}\cdots
     \I(H_{r})^{qb_{r}}{\mathcal J'}^{q})_{x'}\subseteq
     (\I(H_{1})^{pa_{1}}\cdots \I(H_{r})^{pa_{r}})_{x'} \\
     & \Longleftrightarrow
     \left(\frac{b_{i}}{a_{i}}\right)(x')\geq \frac{p}{q}, \ i=1,\ldots,r \\
     & \Longleftrightarrow
     \lambda(x')\geq \frac{p}{q}.
\end{align*}
Hence
\begin{align*}
    \mu_{\mathcal I}(\mathcal J)(x)\geq \frac{p}{q} & \Longleftrightarrow
    \lambda(x')\geq \frac{p}{q}, \quad \textnormal{for all $x'\in\Pi^{-1}(x)$},
\end{align*}
and we have \( \mu_{\mathcal I}(\mathcal
J)(x)=\min\{\lambda(x')\mid x'\in\Pi^{-1}(x)\} \).

The lower-semicontinuity of $\mu_\mathcal I(\mathcal J)$ follows
from the properness of $\Pi$.
\end{proof}

As an immediate consequence of the previous theorem we obtain the
following result.

\begin{cor}
The value \( \mu_{\mathcal{I}}(\mathcal{J})(x) \) is rational, for
every \( x\in X \).
\end{cor}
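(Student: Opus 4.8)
The plan is to read off rationality directly from the closed formula supplied by Proposition~\ref{ThFuncionMu}, exploiting the fact that a log-resolution produces only finitely many exceptional hypersurfaces. The whole argument is a finiteness observation: once $\mu_{\mathcal I}(\mathcal J)(x)$ has been identified with a minimum of values of the explicit function $\lambda$, rationality is automatic.

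First I would recall the data entering $\lambda$. By construction $\lambda=\min\{\frac{b_i}{a_i}\mid i=1,\dots,r\}$, where the $a_i$ are the positive integers appearing in the log-resolution (\ref{logres}) of $\mathcal I\OO_{X'}$ and the $b_i$ are the non-negative integers appearing in the factorization (\ref{Jprima}) of $\mathcal J\OO_{X'}$. Following the conventions of Lemma~\ref{LemaLambda}, the value $\lambda_i=\frac{b_i}{a_i}$ is attached to $H_i$, so that at a point $x'\in X'$ we have $\lambda(x')=\min\{\frac{b_i}{a_i}\mid x'\in H_i\}$, with the convention $\lambda(x')=\infty$ when $x'$ lies on none of the $H_i$. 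Since each $a_i$ is a positive integer and each $b_i$ a non-negative integer, every ratio $\frac{b_i}{a_i}$ belongs to $\Q_+\cup\{0\}$; consequently $\lambda$ takes all of its values in the finite set $\{\frac{b_1}{a_1},\dots,\frac{b_r}{a_r}\}\cup\{\infty\}$.

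The final step is to invoke Proposition~\ref{ThFuncionMu}, which gives $\mu_{\mathcal I}(\mathcal J)(x)=\min\{\lambda(x')\mid x'\in\Pi^{-1}(x)\}$. A minimum of values of $\lambda$ again lies in that same finite set, so $\mu_{\mathcal I}(\mathcal J)(x)$ equals either $\infty$ or one of the rationals $\frac{b_i}{a_i}$, which is the asserted conclusion. I anticipate no genuine obstacle: the only conceivable worry, that taking an infimum over the fibre might yield an irrational limit of rationals, is ruled out precisely because $\lambda$ has finite range, so the minimum is attained and is one of finitely many rational values. The content of the corollary is thus entirely carried by Proposition~\ref{ThFuncionMu} together with the finiteness of the number of exceptional hypersurfaces.
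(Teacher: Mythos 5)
Your proposal is correct and is essentially the paper's own argument: the paper derives this corollary as an "immediate consequence" of Proposition~\ref{ThFuncionMu}, precisely because $\mu_{\mathcal I}(\mathcal J)(x)$ is there expressed as a minimum of the finitely many rational values $\frac{b_i}{a_i}$. You have simply spelled out the finiteness observation that the paper leaves implicit.
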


\begin{thm} Let $\mathcal I,\mathcal J\subseteq \mathcal O_X$
be two sheaves of ideals. Then the functions
$\bar{\nu}_{\mathcal{I}}(\mathcal{J})$ and
$\mu_{\mathcal{I}}(\mathcal{J})$ are equal.
\end{thm}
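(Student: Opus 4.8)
The plan is to reduce the statement to the comparison of the two order functions on individual elements and then to obtain each of the two inequalities from the lemmas already at our disposal. Since by definition $\bar{\nu}_{\mathcal{I}}(\mathcal{J})(x)=\inf_{f\in\mathcal{J}_x}\bar{\nu}_{\mathcal{I}_x}(f)$ and $\mu_{\mathcal{I}}(\mathcal{J})(x)=\inf_{f\in\mathcal{J}_x}\mu_{\mathcal{I}_x}(f)$, it is enough to prove that $\bar{\nu}_I(f)=\mu_I(f)$ for every ideal $I=\mathcal{I}_x$ of the local ring $R=\OO_{X,x}$ and every $f\in R$; taking infima over $f\in\mathcal{J}_x$ then yields equality of the two functions at every point $x\in X$. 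I will establish this element-wise equality by proving the two inequalities separately.

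For $\bar{\nu}_I(f)\leq\mu_I(f)$, Remark \ref{RemNuBarCrec} lets me write $\bar{\nu}_I(f)=\sup\{\nu_I(f^n)/n\mid n\geq 1\}=\sup\{m/n\mid f^n\in I^m\}$. Because $I^m\subseteq\overline{I^m}$, every pair $(m,n)$ with $f^n\in I^m$ also satisfies $f^n\in\overline{I^m}$, so $m/n$ lies among the rationals defining $\mu_I(f)$ in Definition \ref{DefOrdMu}; hence the supremum defining $\bar{\nu}_I(f)$ is bounded above by $\mu_I(f)$. For the reverse inequality $\mu_I(f)\leq\bar{\nu}_I(f)$, I fix positive integers $p,q$ with $f^q\in\overline{I^p}$. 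Applying Lemma \ref{LemaBarNu1} to the ideals $I^p$ and $(f^q)\subseteq\overline{I^p}$ gives $\bar{\nu}_{I^p}\big((f^q)\big)\geq 1$, while Lemma \ref{Lema_pq_barnu} applied to $J=(f)$ gives $\bar{\nu}_{I^p}\big((f^q)\big)=\bar{\nu}_{I^p}\big((f)^q\big)=\frac{q}{p}\bar{\nu}_I(f)$. Combining these, $\frac{q}{p}\bar{\nu}_I(f)\geq 1$, that is, $\bar{\nu}_I(f)\geq p/q$; since this holds for every $p/q$ occurring in Definition \ref{DefOrdMu}, taking the supremum gives $\mu_I(f)\leq\bar{\nu}_I(f)$. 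The two inequalities together prove the element-wise equality.

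The main obstacle is concentrated in the reverse inequality, and more precisely inside Lemma \ref{LemaBarNu1}: the passage from membership in the integral closures $\overline{I^p}$ to a genuine lower bound for the asymptotic order $\bar{\nu}_I$ rests on the finiteness of the integral closure of the Rees algebra (equivalently, on a bound of the form $\overline{I^n}\subseteq I^{n-c}$ for $n\gg 0$), which is legitimate here because the local rings $\OO_{X,x}$ are excellent domains, hence analytically unramified. The remaining points are routine and should be dispatched briefly: the degenerate cases $f=0$ (both values equal to $\infty$) and $\mathcal{J}_x=0$ are immediate, and one uses that the rationals $p/q$ in Definition \ref{DefOrdMu} are representative-independent, as already recorded right after that definition.
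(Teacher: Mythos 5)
Your proof is correct and follows essentially the same route as the paper's: the easy inequality $\bar{\nu}_I\leq\mu_I$ comes from $I^{c}\subseteq\overline{I^{c}}$, and the reverse one from combining Lemma \ref{LemaBarNu1} with Lemma \ref{Lema_pq_barnu}, exactly as in the paper's argument. The only deviations are cosmetic and, if anything, slightly cleaner: you argue element-by-element in $\mathcal{J}_x$ and take infima at the end instead of working with the ideal $\mathcal{J}_x$ directly, and you quantify over all pairs $(p,q)$ with $f^{q}\in\overline{I^{p}}$ and take a supremum, rather than setting $p/q$ equal to the value $\mu_{\mathcal I}(\mathcal J)(x)$ as the paper does (which tacitly relies on that value being rational and attained, via the resolution-based Corollary).
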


\begin{proof}
We use the same notation as in Proposition \ref{ThFuncionMu}. Let
us fix a point \( x\in X \). First we prove that \(
\mu_{\mathcal{I}}(\mathcal{J})\geq
\bar{\nu}_{\mathcal{I}}(\mathcal{J}) \).

Set \( c_{n}=\nu_{\mathcal{I}}(\mathcal{J}^{n})(x) \), for all
$n\geq 1$. We observe that
\begin{equation*}
    \bar{\nu}_{\mathcal{I}}(\mathcal{J})(x)=
    \sup_{n\in\mathbb{N}}\frac{c_n}{n}.
\end{equation*}
By definition we have \( \mathcal{J}^{n}\subseteq
\mathcal{I}^{c_{n}}\subseteq\overline{\mathcal{I}^{c_{n}}} \),
which implies that
\begin{equation*}
    \mu_{\mathcal{I}}(\mathcal{J})(x)\geq \frac{c_{n}}{n},\quad
    \textnormal{for all $n\in\mathbb N$}.
\end{equation*}
Therefore
\begin{equation*}
    \mu_{\mathcal{I}}(\mathcal{J})(x)\geq
    \bar{\nu}_{\mathcal{I}}(\mathcal{J}).
\end{equation*}
Conversely, set \( \frac{p}{q}=\mu_{\mathcal{I}}(\mathcal{J})(x)
\). This implies that \(
\mathcal{J}^{q}_{x}\subseteq\overline{{\mathcal I}^p}_x \). By
Lemma~\ref{LemaBarNu1} we have that \(
\bar{\nu}_{\mathcal{I}^{p}}(\mathcal{J}^{q})(x)\geq 1 \) and from
Lemma~\ref{Lema_pq_barnu} we obtain \(
\bar{\nu}_{\mathcal{I}}(\mathcal{J})(x)\geq \frac{p}{q} \).
\end{proof}

\begin{cor}\label{nuesracional}
The value \( \bar{\nu}_{\mathcal{I}}(\mathcal{J})(x) \) is
rational, for every \( x\in X \).
\end{cor}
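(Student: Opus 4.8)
The plan is to deduce the result directly from the two statements immediately preceding it. By the theorem just established, the functions $\bar{\nu}_{\mathcal{I}}(\mathcal{J})$ and $\mu_{\mathcal{I}}(\mathcal{J})$ coincide at every point of $X$; thus it suffices to know that $\mu_{\mathcal{I}}(\mathcal{J})(x)$ is rational for every $x\in X$. This is precisely the content of the corollary drawn from Proposition \ref{ThFuncionMu}, so at the formal level the argument is a one-line composition of these two facts.

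For completeness I would recall why $\mu_{\mathcal{I}}(\mathcal{J})(x)$ is rational, tracing the chain back to Proposition \ref{ThFuncionMu}. That proposition identifies $\mu_{\mathcal{I}}(\mathcal{J})(x)$ with $\min\{\lambda(x')\mid x'\in\Pi^{-1}(x)\}$, where $\lambda=\min\{\frac{b_{i}}{a_{i}}\mid i=1,\ldots,r\}$ and the $a_{i}$ are positive integers while the $b_{i}$ are nonnegative integers coming from the monomial expressions (\ref{logres}) and (\ref{Jprima}). Each ratio $\frac{b_{i}}{a_{i}}$ is therefore rational, and $\lambda$ takes values in the finite set $\{\frac{b_{i}}{a_{i}}\mid i=1,\ldots,r\}\cup\{\infty\}$. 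Consequently the minimum over the fibre $\Pi^{-1}(x)$---however large that fibre may be as a set---is attained at one of these finitely many rational values (or equals $\infty$), which forces $\mu_{\mathcal{I}}(\mathcal{J})(x)\in\Q_{+}\cup\{\infty\}$.

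Combining the two observations yields $\bar{\nu}_{\mathcal{I}}(\mathcal{J})(x)=\mu_{\mathcal{I}}(\mathcal{J})(x)\in\Q_{+}\cup\{\infty\}$, as required. There is essentially no obstacle here: all the genuine work has already been carried out in the construction of the log-resolution and in the proof that $\bar{\nu}_{\mathcal{I}}(\mathcal{J})$ equals $\mu_{\mathcal{I}}(\mathcal{J})$. If one wished to bypass the equality theorem, an alternative route would be to read off $\bar{\nu}_{\mathcal{I}}(\mathcal{J})(x)$ directly from the exponents $a_{i},b_{i}$ on the resolution; but this would merely reprove the equality in disguise, so the stated deduction is the most economical one.
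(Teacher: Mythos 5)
Your argument is exactly the paper's: the corollary is an immediate consequence of the theorem identifying $\bar{\nu}_{\mathcal{I}}(\mathcal{J})$ with $\mu_{\mathcal{I}}(\mathcal{J})$ together with the preceding corollary that $\mu_{\mathcal{I}}(\mathcal{J})(x)$ is rational, which in turn follows from Proposition \ref{ThFuncionMu} since $\lambda$ takes only the finitely many rational values $\frac{b_i}{a_i}$. Your elaboration of why the minimum over the fibre lands in $\Q_+\cup\{\infty\}$ is a faithful unwinding of that chain, so the proposal is correct and matches the paper's route.
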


\begin{defn}
Let \( X \) be a scheme as above with structure of complex
variety. Let \( \mathcal{I}\subseteq\OO_{X} \) be a coherent sheaf
of ideals and \( K\subseteq X \) be a compact set. Let \(
f\in\Gamma(X,\OO_{X}) \). The {\it {\L}ojasiewicz exponent of $f$
with respect to \( \mathcal{I} \) at \( K \)}, denoted by \(
\theta_{K}(f,\mathcal{I}) \), is defined as the infimum of those
$\theta\in\mathbb{R_+}$ such that there exists an open set $U\subseteq\C^n$ such that $K\subseteq U$
and a constant $C\geq 0$ such that
$$
|f(x)|^{\theta}\leq C\cdot\sup_{g\in\Gamma(U,\mathcal{I})}|g(x)|,
$$
for all $x\in U$.

If \( \mathcal{J}\subseteq\OO_{X} \) is a sheaf of ideals, then
\begin{equation*}
    \theta_{K}(\mathcal{J},\mathcal{I})=
    \sup_{f\in\Gamma(X,\mathcal{J})}\theta_{K}(f,\mathcal{I}).
\end{equation*}
\end{defn}

\begin{thm}\label{invers}
\textnormal{\cite[6.3]{LejeuneTeissier1974}} Under the hypothesis
of the previous definition we have
\begin{equation*}
    \theta_{K}(\mathcal{J},\mathcal{I})=
    \frac{1}{\bar{\nu}_{\mathcal{I}}(\mathcal{J})(K)},
\end{equation*}
where \( \bar{\nu}_{\mathcal{I}}(\mathcal{J})(K)=
\min\{\bar{\nu}_{\mathcal{I}}(\mathcal{J})(x)\mid x\in K\} \).
\end{thm}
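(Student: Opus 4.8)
The plan is to translate the analytic {\L}ojasiewicz inequality into an algebraic statement about integral closure, so that the previously established theorem identifying $\bar\nu_{\mathcal I}(\mathcal J)$ with $\mu_{\mathcal I}(\mathcal J)$ can be brought to bear. The key bridge is the following classical fact: for a single $f\in\Gamma(X,\OO_X)$, the inequality $|f(x)|^{\theta}\leq C\sup_{g}|g(x)|$ holding on a neighbourhood of $K$ is, up to passing between a rational exponent $\theta=p/q$ and its defining pair, equivalent to the membership $f^{p}\in\overline{\mathcal I^{q}}$ locally near each point of $K$. This equivalence is exactly the analytic description of the integral closure (the valuative/analytic criterion), and it is what turns $\mu_{\mathcal I}(f)$, defined as $\sup\{p/q\mid f^{q}\in\overline{\mathcal I^{p}}\}$, into the reciprocal of the best analytic exponent.

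First I would fix a point $x\in K$ and argue pointwise. I would show that $\theta_{K}(f,\mathcal I)$, being the infimum of admissible exponents $\theta$ in the inequality, equals $1/\mu_{\mathcal I}(f)(x)$ when the infimum is controlled at $x$, by matching the rational exponents $p/q$ appearing in $\mu_{\mathcal I}$ with the reciprocals $q/p$ appearing as admissible {\L}ojasiewicz exponents. Concretely, $f^{q}\in\overline{\mathcal I^{p}}$ near $x$ gives the inequality $|f|^{p/q}\leq C\sup_{g}|g|$ near $x$ (so $p/q$ is an admissible $\theta$), and conversely an admissible inequality forces the corresponding integral-dependence relation. Taking suprema and infima on the two sides, and then applying the theorem $\bar\nu_{\mathcal I}(f)=\mu_{\mathcal I}(f)$, yields
\begin{equation*}
    \theta_{K}(f,\mathcal I)=\frac{1}{\mu_{\mathcal I}(f)(x)}=\frac{1}{\bar\nu_{\mathcal I}(f)(x)}
\end{equation*}
at the local level.

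Next I would globalize over the compact set $K$. Since $\theta_{K}(f,\mathcal I)$ requires the inequality to hold on a single neighbourhood of all of $K$ simultaneously, the relevant exponent is governed by the worst point, i.e.\ by $\min_{x\in K}\bar\nu_{\mathcal I}(f)(x)$; here compactness of $K$ together with the lower-semicontinuity of $\bar\nu_{\mathcal I}(\mathcal J)$ established via Proposition~\ref{ThFuncionMu} guarantees the minimum is attained and that finitely many local neighbourhoods cover $K$ with a uniform constant. This gives $\theta_{K}(f,\mathcal I)=1/\bar\nu_{\mathcal I}(f)(K)$. Finally I would pass from a single $f$ to the sheaf $\mathcal J$ by taking the supremum over $f\in\Gamma(X,\mathcal J)$ on the left and the corresponding infimum defining $\bar\nu_{\mathcal I}(\mathcal J)$ on the right, checking that $\sup_{f}\theta_{K}(f,\mathcal I)=1/\min_{f}\bar\nu_{\mathcal I}(f)(K)$, which is precisely the claimed formula.

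The main obstacle I expect is the rigorous justification of the local equivalence between the analytic {\L}ojasiewicz inequality and algebraic integral dependence, uniformly in a neighbourhood of $K$: one must ensure that the constant $C$ and the neighbourhood $U$ can be chosen uniformly (so that the infimum defining $\theta_K$ behaves well under the passage to $\min$ over $K$), and that the supremum over generators $g\in\Gamma(U,\mathcal I)$ genuinely captures the integral closure rather than merely the ideal itself. This is where the analytic characterization of integral closure — that $f\in\overline{\mathcal I}$ iff $|f|\leq C\sup_g|g|$ locally — does the essential work, and it is the step that requires the complex-analytic structure of $X$ rather than purely scheme-theoretic input. The remaining reductions are formal manipulations of suprema and infima of rational exponents.
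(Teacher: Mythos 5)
The paper does not actually prove this statement: it is quoted from \cite[6.3]{LejeuneTeissier1974} and used as an imported black box, so there is no in-paper proof to measure you against. Judged on its own terms, your sketch reconstructs the classical Lejeune--Teissier argument along the expected lines: the analytic criterion for integral closure ($f\in\overline{\mathcal{I}}$ near $x$ if and only if $|f|\leq C\sup_{g}|g|$ on a neighbourhood of $x$) converts each membership $f^{q}\in\overline{\mathcal{I}^{p}}$ into an admissible exponent, the supremum defining $\mu_{\mathcal{I}}$ becomes the reciprocal of the infimum defining $\theta$, compactness of $K$ together with the semicontinuity coming from Proposition~\ref{ThFuncionMu} gives uniformity of the neighbourhood and the constant, and the passage from a single $f$ to $\mathcal{J}$ is a sup/inf exchange. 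Two points deserve care. First, there is an inversion in your ``concretely'' sentence: $f^{q}\in\overline{\mathcal{I}^{p}}$ yields $|f|^{q}\leq C\big(\sup_{g}|g|\big)^{p}$, hence $|f|^{q/p}\leq C'\sup_{g}|g|$, so the admissible exponent is $q/p$, not $p/q$; since your surrounding sentences and your final formula $\theta_{K}(f,\mathcal{I})=1/\mu_{\mathcal{I}}(f)(x)$ are consistent with the correct version, this reads as a slip rather than a structural error. Second, the final step $\sup_{f\in\Gamma(X,\mathcal{J})}\theta_{K}(f,\mathcal{I})=1/\min_{x\in K}\inf_{f\in\mathcal{J}_{x}}\bar{\nu}_{\mathcal{I}_{x}}(f)$ silently assumes that global sections of $\mathcal{J}$ realize the stalkwise infimum; this needs coherence of $\mathcal{J}$ and a Theorem~A type argument, or a reduction to a suitable affine or Stein neighbourhood of $K$, and should be stated. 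The essential analytic input --- the characterization of $\overline{\mathcal{I}}$ by the inequality $|f|\leq C\sup_{g}|g|$ --- is itself a nontrivial theorem of \cite{LejeuneTeissier1974}, but since the statement under review is precisely their Theorem~6.3, citing that criterion rather than reproving it is legitimate, and your sketch correctly isolates it as the load-bearing step.
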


As a direct consequence of the previous theorem and of Corollary
\ref{nuesracional} we obtain that the {\L}ojasiewicz exponent $
\theta_{K}(\mathcal{J},\mathcal{I})$ is a rational number.

\begin{defn}
Let $\mathcal I,\mathcal J\subseteq \mathcal O_X$ be two sheaves
of ideals. We define the function \(
\theta(\mathcal{J},\mathcal{I}):X\to\mathbb{Q} \) as follows:
\begin{equation*}
    \theta(\mathcal{J},\mathcal{I})(x)=
    \theta_{\{x\}}(\mathcal{J},\mathcal{I}),
\end{equation*}
for all $x\in X$.
\end{defn}

From Proposition \ref{ThFuncionMu} and Theorem \ref{invers} we
obtain that the function \( \theta(\mathcal{J},\mathcal{I}):X\to
\Q\) is upper-semicontinuous.

\section{Computation of {\L}ojasiewicz exponents for isolated
singularities.}

Let $W$ be an scheme with structure a regular analytic variety.
Let \( \mathcal{I} \) be a sheaf of ideals in \( \OO_{W} \) such
that \( \Supp(\mathcal{I})=\{x\} \), where \( x\in W \).
We define
the {\it {\L}ojasiewicz exponent of $\mathcal I$ at \( x \)} as \(
\mathcal{L}_{x}(\mathcal{I})=\theta(\mathcal{J},\mathcal{I})(x) \)
where \( \mathcal{J} \) is the sheaf of ideals
\begin{equation*}
    \mathcal{J}_{y}=\left\{
    \begin{array}{lll}
        \mathfrak{m}_{x} & \text{if} & y=x  \\
        1 & \text{if} & y\neq x.
    \end{array}
    \right.
\end{equation*}

\begin{thm}
The \L ojasiewicz exponent of \( \mathcal{I} \) is determined by
the total transform of \( \mathfrak{m}_{x}\) via the
log-resolution of \( \mathcal{I} \).
\end{thm}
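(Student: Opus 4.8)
The plan is to combine the two main structural results already established: Proposition \ref{ThFuncionMu}, which expresses $\mu_{\mathcal I}(\mathcal J)(x)$ (equivalently $\bar\nu_{\mathcal I}(\mathcal J)(x)$, by the preceding theorem) in terms of the exponents $a_i$ of the log-resolution of $\mathcal I$ and the exponents $b_i$ of the total transform of $\mathcal J$, together with Theorem \ref{invers}, which inverts this quantity to yield the \L ojasiewicz exponent. Since by definition $\mathcal L_x(\mathcal I)=\theta(\mathcal J,\mathcal I)(x)$ where $\mathcal J$ is the sheaf whose stalk at $x$ is the maximal ideal $\mathfrak m_x$ and which is trivial elsewhere, the task reduces to identifying precisely which data determine $\mathcal L_x(\mathcal I)$.

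First I would fix a log-resolution $\Pi:W'\to W$ of $\mathcal I$ as in Definition \ref{DefLogResol}, so that $\mathcal I\OO_{W'}=\I(H_1)^{a_1}\cdots\I(H_r)^{a_r}$, and write the total transform of the specific sheaf $\mathcal J$ (with $\mathcal J_x=\mathfrak m_x$) as $\mathcal J\OO_{W'}=\I(H_1)^{b_1}\cdots\I(H_r)^{b_r}\mathcal J'$, where the $b_i$ are exactly the orders to which $\mathfrak m_x$ vanishes along the exceptional components $H_i$. Applying Theorem \ref{invers} with $K=\{x\}$ together with Proposition \ref{ThFuncionMu} gives
\begin{equation*}
    \mathcal L_x(\mathcal I)=\frac{1}{\bar\nu_{\mathcal I}(\mathcal J)(x)}
    =\frac{1}{\min\{\lambda(x')\mid x'\in\Pi^{-1}(x)\}}
    =\max\left\{\frac{a_i}{b_i}\,\Big|\,x'\in\Pi^{-1}(x),\ x'\in H_i\right\}.
\end{equation*}

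The conceptual point to emphasize is that the numbers $a_i$ come from the log-resolution of $\mathcal I$, while the numbers $b_i$ are nothing but the vanishing orders of the total transform of $\mathfrak m_x$ along the components $H_i$ meeting the fiber $\Pi^{-1}(x)$; hence all the data entering the formula for $\mathcal L_x(\mathcal I)$ are read off from the single log-resolution of $\mathcal I$ applied to $\mathfrak m_x$. I expect the only delicate step to be bookkeeping: verifying that the components $H_i$ with $x'\in H_i$ for some $x'\in\Pi^{-1}(x)$ are exactly the ones contributing, and that the finite-colength hypothesis $\Supp(\mathcal I)=\{x\}$ guarantees the fiber $\Pi^{-1}(x)$ meets the relevant exceptional locus so that the maximum is over a nonempty finite set and is a genuine rational number. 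Once this is checked, the statement that $\mathcal L_x(\mathcal I)$ is \emph{determined by the total transform of $\mathfrak m_x$ via the log-resolution of $\mathcal I$} follows immediately, since both families $\{a_i\}$ and $\{b_i\}$ are encoded in that total transform.
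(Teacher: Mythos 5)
Your proposal is correct and follows essentially the same route as the paper: both reduce $\mathcal L_x(\mathcal I)$ to the containments $\mathfrak m_x^p\subseteq\overline{\mathcal I^q}$ read off from the monomial exponents $a_i,b_i$, yielding $\mathcal L_x(\mathcal I)=\max_i\{a_i/b_i\}$; the paper simply re-derives the equivalences of Proposition \ref{ThFuncionMu} directly instead of citing it together with Theorem \ref{invers}. The only point the paper adds that you omit is that, since the first blowing-up is centered at $\Supp(\mathcal I)=\{x\}$, the total transform of $\mathfrak m_x$ is a pure monomial $\I(H_1)^{b_1}\cdots\I(H_r)^{b_r}$ with no residual factor $\mathcal J'$, which is what justifies the phrase ``determined by the total transform of $\mathfrak m_x$.''
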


\begin{proof}
Let us consider a log-resolution of \( \mathcal{I} \) as in
Definition \ref{DefLogResol}. The morphism \( W'\to W \) is a
sequence of blowing-ups along regular centers:
\begin{equation*}
    W=W_{0}\longleftarrow W_{1}\longleftarrow \cdots \longleftarrow
    W_{r}=W'.
\end{equation*}
We observe that the first blowing-up must have \(
\Supp(\mathcal{I}) \) as center. Therefore \(
\mathcal{J}\OO_{W_1}=\mathfrak{m}_{x}\OO_{W_{1}}=I(H_{1}) \) and
the total transform of \( \mathfrak{m}_{x} \) is a monomial, that
is
\begin{equation*}
    \mathfrak{m}_{x}\OO_{W'}=\I(H_{1})^{b_{1}}\cdots
    \I(H_{r})^{b_{r}},
\end{equation*}
for some positive integers $b_1,\dots, b_r$.

Let us suppose that the total transform of $\mathcal I$ in $W'$ is
written as in (\ref{totaltransfI}). Then, we obtain the following
equivalences:
\begin{align*}
    \mathfrak{m}_{x}^{p}\subseteq \overline{\mathcal{I}^{q}} \quad
    \Longleftrightarrow & \quad
    \I(H_{1})^{pb_{1}}\cdots \I(H_{r})^{pb_{r}} \subseteq     \I(H_{1})^{qa_{1}}\cdots \I(H_{r})^{qa_{r}} \\
    \Longleftrightarrow & \quad pb_{i}\geq qa_{i}, \quad
    i=1,\ldots,r  \\
    \Longleftrightarrow & \quad \frac{p}{q}\geq \frac{a_{i}}{b_{i}},
    \quad i=1,\ldots,r.
\end{align*}

Then, we conclude that
\begin{equation}\label{calculodeL}
\mathcal L_{_{x}}(\mathcal{I})=\max\left\{\dfrac{a_{i}}{b_{i}},\
i=1,\ldots,r\right\}.
\end{equation}
\end{proof}

By (\ref{calculodeL}), the problem of computing $\mathcal
L_{_{x}}(\mathcal{I})$ reduces to determine the integers $a_i,
b_i$, for $i=1,\dots, r$, which in turn, come from determining the
total transform of \( \mathfrak{m}_{x}\) via the log-resolution of
\( \mathcal{I} \). Next we expose some examples in the ring
$\OO_n$ of holomorphic gems $f:(\C^n,0)\to \C$.

\begin{ex} Let us consider the ideal $I$ of $\OO_3$ generated by the polynomials
\begin{align*}
g_1&=x^4+xyz+y^4\\
g_2&=xy^2z\\
g_3&=y^5+z^5.
\end{align*}

Then, applying relation (\ref{calculodeL}), it follows that
$\mathcal L_0(I)=5+\frac 56$. Let us denote by $e(I)$ the Samuel
multiplicity of $I$. The same value for $\mathcal L_0(I)$ is
obtained by following the approach explained in Section 4 of
\cite{Bivia2008prep}, since $e(I)$ equals the Rees mixed
multiplicity of the ideals $I_1=\langle x^4, xyz, y^4 \rangle$,
$I_2=\langle xy^2z \rangle$ and $I_3=\langle y^5, z^5\rangle$,
which is equal to $80$.
\end{ex}

\begin{ex} Let us consider the function $f\in \OO_3$ given by
$f(x,y,z)=y^6+z^4+x(x-3z)^2$ and let us denote by $\mu(f)$ the
Milnor number of $f$. We observe that $f$ is a Newton degenerate
function in the sense of \cite{Kouchnirenko1976}. Moreover
$\mu(f)=25$, whereas the Newton number of the Newton polyhedron of
$f$ is equal to $20$. Therefore, the \L ojasiewicz exponent of $f$
can not be computed using the technique explained in
\cite{Bivia2008prep} via mixed multiplicities of monomial ideals.

Using relation (\ref{calculodeL}) we obtain
$$
\mathcal L_0(\nabla f)=5.
$$

Therefore, by virtue of \cite{Teissier1977}, the degree of
topological determinacy of $f$ is given by
$$
[\mathcal L_0(\nabla f)]+1=6.
$$
\end{ex}

\end{document}